\numberwithin{equation}{section}
\newtheorem{theorem}{Theorem}
\newtheorem{lemma}{Lemma}
\theoremstyle{definition}
\newtheorem{rem}{Remark}
\newcommand{\DD}{\mathrm D}
\newcommand{\cD}{\mathcal D}
\newcommand{\cC}{\mathcal C}
\newcommand{\cP}{\mathcal P}
\newcommand{\cK}{\mathcal K}
\newcommand{\cF}{\mathcal F}
\newcommand{\cT}{\mathcal T}
\newcommand{\IR}{\mathbb R}
\newcommand{\IN}{\mathbb N}
\newcommand{\tD}{\text D}
\newcommand{\tN}{\text N}
\newcommand{\vp}{\varphi}
\DeclareMathOperator{\Span}{span}
\DeclareMathOperator{\meas}{meas}
\DeclareMathOperator{\diag}{diag}
\title[Galerkin method for 
hyperbolic PDE with memory]
{Existence and convergence of Galerkin approximation 
for second order hyperbolic equations with memory term}
\author[F.~Saedpanah]{Fardin Saedpanah}
\address{Department of Mathematics, 
University of Kurdistan, P. O. Box 416, 
Sanandaj, Iran}
\email{f.saedpanah@uok.ac.ir\\
           f\_saedpanah@yahoo.com}
\keywords{integro-differential equation, 
Galerkin approximation, Picard iteration, finite element method, 
weakly singular kernel, a priori estimate.}
\subjclass{65M60, 45K05}  
\begin{document}

\begin{abstract}
We study a second order hyperbolic initial-boundary value partial 
differential equation with memory, that results in an integro-differential 
equation with a convolution kernel. 
The kernel is assumed to be either smooth or no 
worse than weakly singular, that arise ,e.g., in linear and fractional 
order viscoelasticity. Existence and uniqueness of the spatial local 
and global Galerkin approximation of the problem is proved by means of 
Picard iteration. Then spatial finite element approximation of the problem 
is formulated, and optimal order a priori estimates are proved by energy method. 
The required regularity of the solution, for optimal order convergence, 
is the same as minimum regularity of the solution for second order 
hyperbolic partial differential equations.   
\end{abstract}

\date{January 27, 2014}

\maketitle
 
%---------------------------------------------------------------------%
\section{Introduction}
%---------------------------------------------------------------------%
We study, for any fixed $T>0$,  integro-differential 
equations of the form
\begin{equation}  \label{Problem}
  \ddot u+A u-\int_0^t K(t-s)Au(s)\,ds=f, \quad t \in (0,T), 
  \quad {\rm with} \ u(0)=u^0,\ \dot{u}(0)=u^1,
\end{equation}
(we use `$ \cdot $' to denote `$ \frac{\partial}{\partial t} $') 
together with a homogeneous Dirichlet boundary 
condition on a bounded polygonal domain $\Omega \subset \IR^d$, $d=1,2,3$ 
with boundary $\partial \Omega$, or with    
%Though the theory can be easily extended to 
mixed homogeneous 
Dirichlet and nonhomogeneous Neumann boundary condition, that 
is important from practical view point. 
Here $A$ is a self-adjoint, positive definite, uniformly elliptic 
second order linear operator on a separable Hilbert space. 
The kernel $K$ is considered to be either smooth (exponential), 
or no worse than weakly singular, that is singular at the origin 
but locally integrable. 
We assume that the kernel has the properties
\begin{align}  \label{KernelProperty}
  K\geq 0,\quad \dot K \leq 0, \quad \| K\|_{L_1(0,T)}=\kappa<1.
\end{align}
Examples  of this type of problems, that appear ,e.g., in  
the theory of linear and fractional order viscoelasticity, is found in 
\cite{FardinEJM2014}, and references therein.

The Mittag-Leffler type kernels, as a chief example of fractional 
order kernels, is used in fractional order viscoelasticity, and  
interpolates between weakly singular kernels and smooth (exponential) 
kernels, that arise in the theory of linear viscoelasticity.   
This means that, the model problem \eqref{Problem} with the kernel satisfying 
\eqref{KernelProperty}, can capture the mechanical behavior for 
a wide class of materials, see \cite{FardinEJM2014} for more details and references.  
This is the reason for considering problem \eqref{Problem} with 
convolution kernel satisfying \eqref{KernelProperty}. 
Our  cheif examples for weakly dingular kernels are 
\begin{equation*}
  K(t)=\frac{t^{\alpha -1}}{\Gamma(\alpha)}, \quad 0<\alpha <1,
\end{equation*}
and the Mittag-Leffler type kernels, see \cite{FardinEJM2014} 
and \cite{StigFardin2010}.

There is an extensive literature on theoretical and numerical analysis 
for partial differential equations (PDEs) with memory, in particular 
integro-differential equations. To mention some, see  
\cite{FardinEJM2014}, \cite{StigFardin2010}, 
\cite{RiviereShawWhiteman2007}, 
\cite{AdolfssonEnelundLarsson2008},  
\cite{McLeanThomee2010}, 
\cite{FardinBIT2013}, 
\cite{PaniThomeeWahlbin1992}, 
\cite{AdolfssonEnelundLarsson2004},  
\cite{LinThomeeWahlbin1991},  
and their references. 

In \cite{FardinEJM2014} well-posedness of a problem, that is slightly 
more general than \eqref{Problem}, has been studied 
based on (a global) Galerkin approximation method. 
The first step, \cite[$\S 3.2$ ]{FardinEJM2014},  
is to prove existence of a unique solution of the approximate problem 
by Galerkin method, and using the Laplace tranform. 
In this work, we extend and give an alternative proof for 
existence and uniqueness of local and global Galerkin approximation 
methods, and we give a straightforward and constructive proof 
based on Picard iteration. 

We note that, we treat the model problem \eqref{Problem} as a 
hyperbolic second order PDE with memory. 
In \cite{Rauch}, using an explicit representation of the solution of 
the wave equation, it has been proved that the finite element 
approximation of the wave equation has optimal order of convergence, 
and an extra derivative of regularity of the solution is required 
to obtain this rate of convergence, and this regularity requirement is minimum. 

Spatial finite element approximation of hyperbolic integro-differential 
equations similar to \eqref{Problem} have been studied in 
\cite{LinThomeeWahlbin1991}, 
\cite{AdolfssonEnelundLarssonRacheva2006} and \cite{FardinBIMS2012}.   
For some fully discrete methods see \cite{FardinBIT2013} and 
references therein. 
In \cite{LinThomeeWahlbin1991} and 
\cite{AdolfssonEnelundLarssonRacheva2006}, for optimal order 
$L_\infty(L_2)$ a priori error estimate for the solution $u$, 
they require  two extra derivative of regularity of the solution. 
This was relaxed in \cite{StigFardin2010} and \cite{FardinBIMS2012} 
to one extra derivative, 
using the so called velocity-displacement form of the problem, 
that is a system of first order ordinary differential equations (ODEs) 
with respect to the time variable, 
and require stability estimates of a slightly more general problem. 
Here, using a different technique adapted from \cite{Baker1976}, for second 
order hyperbolic PDEs, we present an alternative   
proof to obtain $L_\infty(L_2)$ optimal order a priori 
error estimate with one extra derivative regularity of the solution $u$, 
similar to \cite{StigFardin2010} and \cite{FardinBIMS2012}. 
Here, we do not use the so called velocity-displacement formulation, and  
we give a short  and straightforward proof. Comparing with the second 
order hyperbolic PDEs, this one extra derivative seems to be minimal also 
for the counterpart integro-differential equations. 
However, a similar proof as in \cite{Rauch} 
can not be directly applied to our model problem \eqref{Problem}, 
due to the lack of an explicit representation of the solution. 
This minimal regularity assumption of the solution is also an important 
issue, e.g. in the error analysis of the finite element approximation of PDEs, 
see \cite{KovacsLarssonSaedpanah2010}. 
The present work also extend previous works, e.g., 
\cite{AdolfssonEnelundLarsson2008}, \cite{AdolfssonEnelundLarsson2004}, 
\cite{ShawWhiteman2004}, on quasi-static fractional order viscoelsticity 
$(\ddot{u}\approx 0)$ to the dynamic case.

The rest of the paper is  organized as follows. 
In $\S2$, we bring preliminaries and introduce the weak form of the problem. 
Then, in $\S3$ existence and uniqueness of local and global Galerkin 
approximation of the problem is proved.  
The spatial finite element discretization of the problem is formulated 
in $\S4$, and $L_\infty(L_2)$ and $L_\infty(H^1)$ optimal a priori error 
estimates for the displacement $u$ is proved together with 
$L_\infty(L_2)$ optimal a priori error estimate for the velocity $\dot u$. 
%Finally, in $\S5$,  we illustrate the theory by a simple example. 

%----------------------------------------------------------%
\section{Preliminaries and weak formulation}
%----------------------------------------------------------%
We recall that $\Omega \subset \IR^d$, $d=1,2,3$, is a bounded 
polygonal domain with boundary $\partial \Omega$. 
We use  the standard Sobolev spaces $H^s=H^s(\Omega)^d$ with 
the corresponding norms $\|\cdot\|_s$ and inner products. 
We denote $H=H^0=L_2(\Omega)^d$ and $V=H_0^1(\Omega)^d$, 
and the corresponding norms by $\|\cdot\|$ and $\|\cdot\|_V$, respectively. 
We recall that $A$ is a self-adjoint, positive definite, uniformly elliptic 
second order linear operator on $\cD(A)=H^2 \cap V$. 
Therefore we can equip $V$ with the inner product 
$a(\cdot,\cdot)=(A\cdot,\cdot)$ and the corresponding norm 
$\|\cdot\|_V^2 = a(\cdot,\cdot)$, since $\|\cdot\|_v$ and $\|\cdot\|_1$ 
are equivalent. 

Then the weak form of the model problem \eqref{Problem} 
is read as, find $u(t) \in V$ such that
\begin{equation}   \label{weakform1}
\begin{split}
  &(\ddot u(t),v) + a(u(t),v)
     -\int_0^t\! K(t-s) a(u(s), v) \,ds \\
  &\qquad\qquad\qquad\qquad\qquad\quad
  = (f(t),v), \quad \forall v \in V ,\ t\in (0,T),\\
 %     \label{weaksolution3}
  &u(0)=u^0,\quad \dot u(0)=u^1.
\end{split}
\end{equation}

In the case of mixed homogeneous Dirichlet and 
nonhomogeneous Neumann boundary conditions, 
we assume that 
$\partial \Omega= \partial \Omega_{\tD} \cup \partial \Omega_{\tN}$, 
where $\partial \Omega_{\tD}$ and $\partial \Omega_{\tN}$ are disjoint and 
$\meas(\partial \Omega_{\tD}) \neq 0$. Then we denote 
$V=\{v \in H^1 : v|_{\partial \Omega_{\tD}} =0 \}$ 
and $H_{\partial \Omega_{\tN}}=L_2(\partial \Omega_{\tN})^d$. 
Our chief example for this case is $Au=-\nabla \sigma_0(u)$ 
where $\sigma_0$ is the standard stress tensor in elasticity, 
see e.g., \cite{StigFardin2010}. 
Then the corresponding nonhomogeneous Neumann boundary condition is 
\begin{equation*}
  \Big(\sigma_0 - \gamma \int_0^t K(t-s) \sigma_0(s) ds\Big) \cdot n =g(t),
\end{equation*}
and we introduce the bilinear form (with the usual summation convention)
\begin{equation*}
  a(u,v)=\int_{\Omega}\!\big(
  2\mu\epsilon_{ij}(u)\epsilon_{ij}(v)
   +\lambda\epsilon_{ii}(u)\epsilon_{jj}(v)\big)\,dx,
   \quad \forall u,v\in V,
\end{equation*}
where $\epsilon$ is the usual strain tensor and $\lambda, \mu$ are elastic 
constants of Lam\'e type. It is known that $a(\cdot,\cdot)$ is coercive. 
Then the weak form is read as, find $u(t) \in V$ such that
\begin{equation}   \label{weakform2}
\begin{split}
  &(\ddot u(t),v) + a(u(t),v)
     -\int_0^t\! K(t-s) a(u(s), v) \,ds \\
  &\qquad\qquad\qquad\qquad\qquad\quad
  = (f(t),v) + (g(t),v)_{\partial \Omega_{\tN}}, \quad \forall v \in V ,\ t\in (0,T),\\
 %     \label{weaksolution3}
  &u(0)=u^0,\quad \dot u(0)=u^1,
\end{split}
\end{equation}
where 
$(g(t),v)_{\partial \Omega_{\tN}}=\int_{\partial \Omega_{\tN}}\!g(t)\cdot v\,dS$.

%----------------------------------------------------------------------------------------------%
\section{Existence and uniqueness of the Galerkin approximation solution}
%----------------------------------------------------------------------------------------------%
To be complete, we consider the model problem with 
mixed homogeneous Dirichlet and nonhomogeneous Neumann 
boundary conditions, with weak form \eqref{weakform2}. 
In this section, we study existence and uniqueness of spatial approximate 
solution of \eqref{Problem} by local and global Galerkin 
methods, using Picard iteration. 
In particular we use the standard finite element Galerkin 
method (FEGM), as a local Galerkin method,  
and a global Galerkin method (GGM) 
based on the eigenfunctions of the operator $A$. 

Spatial semidiscretization of \eqref{Problem} can be based on a 
local  or global Galerkin method, using a finite dimensional subspace 
$V_h=\Span\{\vp_1,\dots,\vp_m\}$ of $V$. 
For the finite element method we use the standard finite element spaces $V_h$ 
consisting of continuous piecewise polynomials, corresponding to a triangulation 
of the computational domain $\Omega$. 
Let $\{(\lambda_j,\vp _j)\}_{j=1}^\infty$ be the eigenpairs 
of the weak eigenvalue problem
\begin{equation}   \label{weakeigenvalue}
  a(\vp,v)=\lambda(\vp,v),\quad \forall v\in V.
\end{equation}
It is known that $\{\vp _j\}_{j=1}^\infty$ can be chosen to be an ON-basis in 
$H$ and an orthogonal basis for $V$. 
Then any finite dimensional space 
$V_h=\Span\{\vp_1,\dots,\vp_m\}$ can be a basis for a GGM, 
so called spectral Galerkin method. 
Obviously limitations of the Galerkin methods, such as 
spectral Galerkin methods,  on the computational 
domain $\Omega$ should be considered. 

We recall  the $L_2$-projection $P_h:H\to V_h$ and 
the Ritz projection $R_h:V\to V_h$ defined by
\begin{equation} \label{PR_projections}
  a(R_h v, \chi)=a(v,\chi)\quad {\rm and}\quad 
  (P_h v,\chi)=(v,\chi),\qquad \forall \chi \in V_h.
\end{equation}
We also recall the truncated Fourier series projection 
$P_F:H \to V_h$, defined by%, for any $v \in H$, 
\begin{equation*}  %\label{PR_projections}
  P_F v = \sum_{j=1}^m (v,\vp_j)\vp_j = \sum_{j=1}^m \tilde v \vp_j .
\end{equation*}
 
Now, for a fixed positive integer $m\in\IN$, we seek a function of the form
\begin{equation}   \label{u_m}
  u_m(t)=\sum_{j=1}^m \alpha_j(t)\vp _j , 
\end{equation}
that satisfies 
\begin{equation}   \label{weakGalerkin}
  \begin{split}
    (\ddot u_m(t),\vp_k&) + a(u_m(t),\vp_k)
    -\int_0^t\! K(t-s) a(u_m(s), \vp_k)\,ds\\
    &= (f(t),\vp_k) + (g(t),\vp_k)_{\partial\Omega_\tN},
    \quad k=1,\,\dots,\,m ,\,t\in (0,T),
  \end{split}
\end{equation}
with initial conditions
\begin{equation}   \label{weakGalerkininitial}
  u_m(0)=P^0 u^0, \quad \dot u_m(0)=P^1 u^1.
\end{equation}
Here $P^0$ and  $P^1$ are suitable projections to be chosen, 
such as the standard $L_2$-projection, 
the Ritz projection, or the truncated Fourier 
series projection. 
We note that, denoting the vector $\alpha(t)=\big(\alpha_j(t) \big)_{j=1}^m$, 
the initial coefficients 
$\frac{d^l}{dt^l}\alpha(0) = \big(\frac{d^l}{dt^l}\alpha_j(0) \big)_{j=1}^m$, $l=0,1 $, are obtained from 
the system of linear equations, for $l=0,1$,
\begin{align} \label{initialSystem}
\begin{aligned}
  &M \frac{d^l}{dt^l}\alpha(0) = \big((u^l,\vp_i)\big)_{i=1}^m, 
    &&\textrm{ with $L_2$-projection},\\
  &S \frac{d^l}{dt^l}\alpha(0) = a\big((u^l,\vp_i)\big)_{i=1}^m, 
    &&\textrm{ with Ritz projection},\\
  &\frac{d^l}{dt^l}\alpha(0) = \big((u^l,\vp_i)\big)_{i=1}^m, 
    &&\textrm{ with truncated Fourier projection},
\end{aligned}
\end{align}
where $M$ is the standard mass matrix and  $S$ is the stiffness matrix. Therefore the system is uniquely solvable, 
and, e.g., for the last case we have $\frac{d^l}{dt^l}\alpha_j(0)=(u^l,\vp _j)$, that is, 
\begin{equation*}  % \label{weakGalerkininitial}
  \frac{d^l}{dt^l}u_m(0)=\sum_{j=1}^m(u^l,\vp _j)\vp _j, \quad l=0,1.
\end{equation*}

Now, we prove that there exists a unique solution of the form  
\eqref{u_m} for the spatial semidiscrete problem \eqref{weakGalerkin} 
with initial conditions \eqref{weakGalerkininitial}. 
%--------------------------------------------------------------------------------%
%-------                               Lemma 1                              ---------%
%--------------------------------------------------------------------------------%
\begin{lemma}
Assume that the initial data $u^0$ and $u^1$ are regular such that 
the initial conditions in \eqref{weakGalerkininitial} are well-defined, and 
$g\in L_1((0,T);H_{\partial\Omega_\tN} )$,  $f\in L_1((0,T);H)$. 
Then, for each $m\in \IN$, there exists a unique function 
$u_m$ of the form \eqref{u_m} satisfying 
\eqref{weakGalerkin}-\eqref{weakGalerkininitial}. 
\end{lemma}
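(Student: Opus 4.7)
The plan is to reduce existence and uniqueness of $u_m$ to a classical Picard iteration for a linear Volterra system in the coefficient vector $\alpha(t)=(\alpha_j(t))_{j=1}^m$. Substituting the ansatz \eqref{u_m} in \eqref{weakGalerkin} and testing against each $\vp_k$ yields the finite-dimensional matrix equation
\begin{equation*}
M\ddot\alpha(t)+S\alpha(t)-\int_0^t K(t-s)\,S\alpha(s)\,ds=F(t),\quad t\in(0,T),
\end{equation*}
where $M_{kj}=(\vp_j,\vp_k)$ is the positive definite (hence invertible) mass matrix, $S_{kj}=a(\vp_j,\vp_k)$, and $F_k(t)=(f(t),\vp_k)+(g(t),\vp_k)_{\partial\Omega_{\tN}}$; the initial data $\alpha(0),\dot\alpha(0)$ are uniquely determined by \eqref{initialSystem}. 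Setting $B=M^{-1}S$ and $\tilde F=M^{-1}F$, then integrating twice in time against the initial conditions, one obtains the equivalent fixed-point equation
\begin{equation*}
\alpha(t)=\ell(t)-\int_0^t(t-s)B\alpha(s)\,ds+\int_0^t(t-s)\!\int_0^s K(s-\tau)B\alpha(\tau)\,d\tau\,ds,
\end{equation*}
with $\ell(t)=\alpha(0)+t\dot\alpha(0)+\int_0^t(t-s)\tilde F(s)\,ds$, which lies in $C([0,T];\IR^m)$ under the given hypotheses on $f,g$.

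Next, I would set $\alpha^{(0)}=\ell$ and define $\alpha^{(n+1)}$ by substituting $\alpha^{(n)}$ into the right-hand side. For the differences $\delta^{(n)}=\alpha^{(n+1)}-\alpha^{(n)}$, swapping the order of integration in the iterated convolution via Fubini and using that $\int_\tau^t(t-s)K(s-\tau)\,ds\le(t-\tau)\|K\|_{L_1(0,T)}$, one obtains
\begin{equation*}
|\delta^{(n)}(t)|\le c\int_0^t(t-s)|\delta^{(n-1)}(s)|\,ds,\quad c:=\|B\|\bigl(1+\|K\|_{L_1(0,T)}\bigr),
\end{equation*}
which is finite thanks to \eqref{KernelProperty}. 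A straightforward induction then yields $\|\delta^{(n)}\|_{L_\infty(0,T)}\le (cT^2)^n/(2n)!\,\|\delta^{(0)}\|_{L_\infty(0,T)}$, so the series $\sum_n\delta^{(n)}$ converges uniformly on $[0,T]$ and $\{\alpha^{(n)}\}$ converges in $C([0,T];\IR^m)$ to a fixed point $\alpha$ of the integral equation. A bootstrap in $t$ shows $\alpha\in C^2([0,T];\IR^m)$ (or satisfies the matrix system a.e. when $f,g$ are only $L_1$ in time), producing the desired $u_m$. Uniqueness follows by applying the same estimate to the difference of two solutions.

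The only real subtlety is the memory term: one must recognise the iterated convolution, via Fubini, as a single convolution of $K$ against $(t-s)B\alpha$, and then exploit $\|K\|_{L_1(0,T)}<\infty$ from \eqref{KernelProperty} to keep the bound finite even when $K$ is weakly singular at the origin. Once this estimate is in hand, linearity makes the Picard scheme converge on the whole interval $[0,T]$ without any small-time contraction step; the argument applies uniformly to any fixed finite-dimensional subspace $V_h=\Span\{\vp_1,\dots,\vp_m\}\subset V$, treating both the local (finite element) and global (spectral) Galerkin cases at once.
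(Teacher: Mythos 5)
Your proposal is correct and follows essentially the same route as the paper: both reduce the Galerkin system to a linear Volterra equation of the second kind for the coefficient vector and run a Picard iteration whose successive differences are controlled by a factorial bound, with uniqueness obtained from the same estimate (the paper phrases it via Gronwall's inequality). The only difference is cosmetic --- the paper rewrites the second-order system as a first-order system in $(\alpha,\dot\alpha)$ and integrates once, whereas you integrate the second-order equation twice and work with $\alpha$ alone; both yield a kernel bounded uniformly through $\|K\|_{L_1(0,T)}$, so the argument is unaffected.
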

%-----------------------------     PROOF       -----------------------------%
\begin{proof}
We organize our proof in three steps. 
First, using a Galerkin approximation method, we formulate the 
spatial semidiscrete form of the main problem \eqref{Problem} 
as a system of second kind linear Volterra equations. 
Then we prove existence of the solution of the semidiscrete problem 
by means of Picard iteration method. Finally, we prove uniqueness of 
the solution.  

1. Substituting \eqref{u_m} in \eqref{weakGalerkin}, we have
\begin{equation}   \label{SystemweakGalerkin}
  \begin{split}
    \sum_{j=1}^m \Big\{ (\vp _j,\vp _k) \ddot \alpha_j(t)
    &+ a(\vp _j,\vp _k)  \alpha_j(t) 
    -  a(\vp _j,\vp _k) \int_0^t K(t-s)\alpha_j(s) ds \Big\} \\
    &= (f(t),\vp_k) + (g(t),\vp _k)_{\partial\Omega_\tN},
      \quad k=1,\,\dots,\,m ,\,t\in (0,T), 
  \end{split}
\end{equation}
that is a system of linear second order ODEs  
with the initial data \eqref{weakGalerkininitial}, that is computable from 
\eqref{initialSystem}. 
Recalling the vector notation $\alpha(t)=\big(\alpha_j(t)\big)_{j=1}^m$, 
we rewrite \eqref{SystemweakGalerkin} in the matrix form
\begin{equation}   \label{weakGalerkinmatrix1}
    M\ddot \alpha(t)
    + S \alpha(t)- S \int_0^t K(t-s)\alpha(s) ds =F(t)
         \quad t\in (0,T), 
\end{equation}
where 
\begin{equation*}   %\label{SystemweakGalerkin}
  \begin{split}
    &M=(M_{k,j})_{k,j=1}^m=\big((\vp_j,\vp_k) \big)_{k,j=1}^m, \quad 
    S=(S_{k,j})_{k,j=1}^m=\big(a(\vp_j,\vp_k) \big)_{k,j=1}^m, \\
    &F(t)=\big(F_k(t)\big)_{k=1}^m
    =\big((f(t),\vp_k) + (g(t),\vp _k)_{\partial\Omega_\tN} \big)_{k=1}^m. 
  \end{split}
\end{equation*}
We note that, if we use FEGM, then $M$ and $S$ are the 
standard mass matrix and stiffness matrix, respectively. 
While, recalling the eigenvalue problem \eqref{weakeigenvalue}, 
if we use GGM we have 
$M=I_{m}$ and $S=\diag (\lambda_1,\dots,\lambda_m)$. 
Therefore, in both cases, $M$ and $S$ are nonsingular matrices. 

Now we write the system of linear second order ODEs 
\eqref{weakGalerkinmatrix1}, as a system of linear first order ODEs
\begin{equation}   \label{weakGalerkinmatrix2}
    \dot \DD(t)
    +\tilde M^{-1}\tilde  S \DD(t) 
      -\tilde M^{-1} \tilde{\tilde S} \int_0^t K(t-s)\DD(s) ds 
         =\tilde M^{-1}\tilde F(t),
         \quad t\in (0,T), 
\end{equation}
where
\begin{equation*}   %\label{SystemweakGalerkin}
  \begin{split}
    &\DD(t)= \begin{bmatrix} \alpha(t) \\ \dot\alpha(t) \end{bmatrix}, \quad 
       \tilde F(t)=\begin{bmatrix} 0\\ F(t) \end{bmatrix}\\
    &\tilde M=\begin{bmatrix} M & 0\\ 
                                                    0 & M \end{bmatrix}, \quad
       \tilde S
           =\begin{bmatrix} 0& - M\\ S & 0 \end{bmatrix}, \quad 
       \tilde{\tilde S}
           =\begin{bmatrix} 0& 0\\ S & 0 \end{bmatrix}.
  \end{split}
\end{equation*}

Then integrating with respect to $t$ and interchanging the order of integrals 
for the convolution term, we have 
\begin{equation}   \label{integraleq}
  \begin{split}
     \DD(t)&=\int_0^t \tilde M^{-1} \Big(\tilde  S  
      - \int_s^t K(r-s) dr \ \tilde{\tilde S}  \Big)\DD(s)\ ds
               +\int_0^t \tilde M^{-1} \tilde F(s)\ ds + \DD(0)\\
              &=\int_0^t \cK(t,s) \DD(s)\ ds + \cF(t), \quad t\in (0,T),
  \end{split}         
\end{equation}
with obvious notations $\cK$ and $\cF$. 

2. Now we solve the above integral equation by Picard iteration, see e.g., 
\cite[\S 2.1]{Burton} or \cite[\S 8.2]{KelleyPeterson:Book}, 
that is defined by, for $t\in[0,T]$,
\begin{equation}  \label{Picarditeration}
  \begin{split}
     \DD^0(t) &= \cF(t),\\
     \DD^{n+1}(t)&=\int_0^t \cK(t,s) \DD^n(s)\ ds + \cF(t),\quad n=0,1,\dots.
  \end{split}         
\end{equation}
We need to show that the Picard iterates $\{\DD^n(t)\}_{n=0}^\infty$ 
converges uniformly on $[0,T]$. 
We note that 
\begin{equation*}  
     \DD^n(t)=\DD^0(t) + \sum_{n=0}^{n-1} \big( \DD^{n+1}(t) - \DD^n(t)\big),
\end{equation*}
that is the partial sum of the infinite series
\begin{equation}  \label{infiniteseries}
     \DD^0(t) + \sum_{n=0}^\infty \big( \DD^{n+1}(t) - \DD^n(t)\big).
\end{equation}
Therefore, we first prove that the inifinite series is convergent uniformly 
on $[0,T]$, that implies uniform convergence of the Picard iterates 
$\{\DD^n(t)\}_{n=0}^\infty$. 

Denoting the standard maximum vector and matrix norms, 
for $b \in \IR^m$ and $B \in \IR^{m\times m}$, by
\begin{equation*}  
  |b|_\infty = \max_{1\leq k\leq m}|b_k|,\quad
  |B|_\infty = \max_{1\leq k\leq m} \sum_{j=1}^m |B_{k,j}|,
\end{equation*}
we have, recalling $\cK$ and $\cF$ from \eqref{integraleq}, 
\begin{equation}  \label{boundZ}
   \sup_{0 \leq s\leq t \leq T} | \cK(t,s)|_\infty
   \leq |M^{-1}S|_\infty \big(1+\| K \|_{L_1(0,T)}\big)=:\textrm{Z},
\end{equation}
and, using the Cauchy-Schwarz inequality and the trace inequality, 
\begin{equation}  \label{boundZ0}
  \begin{split}
     \sup_{0 \leq t \leq T} | \DD^0(t)|_\infty
     &=\sup_{0 \leq t \leq T} | \cF(t)|_\infty
     \leq |\DD(0)|_\infty +  \int_0^t | \tilde F(s)|_\infty \ ds \\
     &\leq \max \big(|\alpha(0)|_\infty +|\dot \alpha(0)|_\infty  \big)\\
     &\quad + \max_{1\leq k\leq m}
      \Big(\int_0^t |(f(s),\vp_k)|+|(g(s),\vp _k)_{\partial\Omega_\tN}|\ ds\Big) \\
     &\leq \max \big(|\alpha(0)|_\infty +|\dot \alpha(0)|_\infty  \big)\\
     &\quad + \max_{1\leq k\leq m}
        \Big(\int_0^t \|f(s)\|\|\vp_k\| 
        + \|g(s)\|_{\partial\Omega_\tN}\|\vp_k\|_{\partial\Omega_\tN} \ ds\Big) \\
     &\leq \max \big(|\alpha(0)|_\infty +|\dot \alpha(0)|_\infty  \big)\\
     &\quad + \Big(\|f\|_{L_1((0,T);H)}
                   +  \|g\|_{L_1((0,T);H_{\partial\Omega_\tN})} \Big) \\
     &\qquad  \times \max_{1\leq k\leq m} 
                      \big(\|\vp_k\| + C_{\rm Trace}\|\vp_k\|_V \big) \\
     &=:\textrm{Z}^0.
  \end{split}
\end{equation}
Therefore, we have
\begin{equation*}  
    |\DD^1(t)-\DD^0(t)|_\infty 
    \leq \int_0^t |\cK(t,s)|_\infty |\DD^0(s)|_\infty \ ds 
    \leq \textrm{Z} \textrm{Z}^0 t ,
\end{equation*}

We need to show that, for $n= 0,1,\dots$,
\begin{equation}  \label{induction}
    |\DD^{n+1}(t)-\DD^n(t)|_\infty 
    \leq \frac{\textrm{Z}^{n+1}t^{n+1}}{(n+1)!} \textrm{Z}^0.
\end{equation}
The poof is by induction. Indeed, the case $n=0$ has already been proved above. 
If we assume that \eqref{induction} holds for $n-1$, then for $n$ we have
\begin{equation*}  
  \begin{split}
    |\DD^{n+1}(t)-\DD^n(t)|_\infty 
    &\leq \int_0^t |\cK(t,s)|_\infty |\DD^n(s)-\DD^{n-1}(s)|_\infty \ ds\\
    &\leq \textrm{Z} \int_0^t  \frac{\textrm{Z}^n s^n}{n!} \textrm{Z}^0 \ ds \\
    &\leq \frac{\textrm{Z}^{n+1}t^{n+1}}{(n+1)!} \textrm{Z}^0.
  \end{split}
\end{equation*}
Obviously, from \eqref{induction}, we have
\begin{equation*}  
    \sup_{t\in[0,T]}|\DD^{n+1}(t)-\DD^n(t)|_\infty 
    \leq \frac{\textrm{Z}^{n+1} T^{n+1}}{(n+1)!} \textrm{Z}^0,
\end{equation*}
that concludes the infinite series \eqref{infiniteseries} is 
uniformly convergent on $[0,T]$. 
Therefore the Picard iterates $\{\DD^n(t)\}_{n=0}^\infty$ converges uniformly on 
$[0,T]$, say to $\DD(t)$, i.e.,
\begin{equation*}  
    \DD(t)=\lim_{n\to \infty} \DD^n(t), \quad t \in [0,T].
\end{equation*}

3. Finally, it remains to prove uniqueness. 
Assume that $\tilde \DD$ be another solution of the integral 
equation \eqref{integraleq}. 
Then 
\begin{equation*}
  \DD(t) - \tilde \DD(t) 
     = \int_0^t \cK(t,s) \big(\DD(s) - \tilde \DD(s) \big) ds ,
\end{equation*}
and consequently, we have
\begin{equation*}
  |\DD(t) - \tilde \DD(t)|_\infty 
    \leq \int_0^t |\cK(t,s)|_\infty |\DD(s) - \tilde \DD(s) |_\infty ds 
      \leq \textrm{Z} \int_0^t  |\DD(s) - \tilde \DD(s) |_\infty ds, 
\end{equation*}
that, by Gronwall's inequality, implies that 
$|\DD(t) - \tilde \DD(t)|_\infty = 0$, and therefore $\DD=\tilde \DD$. 
Hence the uniqueness is proved. Now the proof is complete. 
\end{proof}

We note that Lemma 1 holds true for any Galerkin method for which 
matrices $M$ and $S$ in \eqref{weakGalerkinmatrix1} are well-defined 
and $M$ is invertible. 

%-----      Remark      -----%
\begin{rem} \label{RemarkPicardLemma}
We recall $\cK$ and $\cF$ from \eqref{integraleq} and the assumptions 
that the kernel $K$ and the load and surface terms $f$ and $g$ are integrable. 
Therefore $\cK$ and $\cF$ are continuous functions, and steps 2 and 
3 of the proof of Lemma 1 is concluded from \cite[Theorem 2.1.1]{Burton}. 
We also note that, using \eqref{Picarditeration} one can show by induction, 
that each $\DD^n \in \cC[0,T]$. 
Hence, taking the limit $n\to \infty$ of both sides of the Picard iteration 
\eqref{Picarditeration} and thanks to uniform convergence, 
we conclude that $\DD\in \cC[0,T]$ is a 
solution of the integral equation \eqref{integraleq}. 
However, we have added more details to the proof to be complete 
and to show how the initial data and the load and surface terms 
should be estimated. 
For example, let consider the global Galerkin method based on the 
eigenfuncations of the operator $A$, and recall that 
$M=I_m$, $S=\diag(\lambda_1, \dots , \lambda_m)$. 
Then recalling \eqref{initialSystem} we have, from \eqref{boundZ},
\begin{equation*}
  \textrm{Z} = (1+\kappa) \max_{1\leq k\leq m} |\lambda_k| ,
\end{equation*}
and, from \eqref{boundZ0}, 
\begin{equation*}
  \textrm{Z}^0 = \|u^0\| + \|u^1\| 
     + \|f\|_{L_1((0,T);H)} 
       + C_{\textrm{Trace}} \|g\|_{L_1((0,T);H_{\partial\Omega_{\tN}})}
          \max_{1\leq k\leq m} \|\vp_k\|_V ,
\end{equation*}
and we note that for this case we need to assume $u^0 \in H$ and $u^1 \in H$. 
\end{rem}

Further analysis on regularity estimates of the solution can be found in 
\cite{FardinEJM2014}. 
%----------------------------------------------------------------%
\section{The spatial finite elment discretization}
%----------------------------------------------------------------%
In this section, for simplicity, we consider pure homogeneous Dirichlet 
boundary condition. 
We recall the variational form \eqref{weakform1}.

Let $\Omega$ be a convex polygonal domain and 
$\{\cT_h\}$ be a regular family of 
 triangulations of $\Omega$ with corresponding family of 
 finite element spaces $V_h^l \subset V$,
 consisting of continuous piecewise polynomials of degree 
 at most $l-1$, that vanish on 
$\partial\Omega$ (so the mesh is required to fit $\partial \Omega$). 
Here $l\ge 2$ is an integer number. 
We define piecewise constant mesh function 
$h_\mathsf{K}(x) = \textrm{diam}(\mathsf{K})$ for 
$x\in \mathsf{K},\, \mathsf{K}\in \mathcal{T}_h$, 
and for our error analysis we denote 
$h=\max_{\mathsf{K}\in\mathcal{T}_h}h_\mathsf{K}$. 
We note that the finite element spaces $V_h^l$ have the property that
\begin{equation} \label{GeneralErrorEstimate}
  \min_{\chi \in V_h^l}
  \{\|v-\chi\| + h\|v-\chi\|_1\}\le C h^i\|v\|_i, 
  \quad \textrm{for} \ v \in  H^i \cap V,\ 1\le i\le l.
\end{equation} 

We recall  the $L_2$-projection $P_h$ and 
the Ritz projection $R_h$ from \eqref{PR_projections}. 
We also recall the elliptic regularity estimate 
\begin{equation*}
  \|u\|_2 \leq C \|Au\|,\quad u \in \cD(A) = H^2 \cap V,
\end{equation*}
such that the error estimates \eqref{GeneralErrorEstimate} hold true 
for the Ritz projection $R_h$, see \cite{Thomee_Book}, i.e.,
\begin{equation} \label{errorRh}
  \begin{split}
    \|(R_h-I)v\| 
    + h\|(& R_h-I)v\|_1\le C h^i\|v\|_i, 
    \quad \textrm{for} \ v \in H^i \cap V,\ 1\le i\le l.
  \end{split}
\end{equation} 

Then, the spatial finite element discretization of \eqref{weakform1} 
is to find $u_h(t) \in V_h^l$ such that 
$u_h(0)=u_h^0,\ \dot u_h(0)=u_h^1$, and for $t\in(0,T)$,
\begin{equation}  \label{FE}
  (\ddot u_h,v_h)+a(u_h,v_h) -\int_0^t\! K(t-s)a(u_h(s),v_h)\ d s=(f,v_h),  
    \quad \forall v_h \in V_h^l, 
\end{equation}
where $u_h^0$ and $u_h^1$ are suitable approximations to be chosen, 
respectively, for $u_0$ and $u^1$ in $V_h^l$.

For our analysis, we define a function $\xi$ by 
\begin{align}   \label{xi}
  \xi(t)=\kappa-\int_0^t\! K(s)\,ds
    =\int_t^T\! K(s)\,ds, \quad t \in [0,T],
\end{align}
and, having \eqref{KernelProperty}, it is easy to see that
\begin{align}   \label{xiproperty}
  \begin{split}
    D_t\xi(t)=- K(t)<0,\quad
    \xi(0)=\kappa,\quad \xi(T)=0,\quad 
    0\leq \xi(t) \leq \kappa.
  \end{split}
\end{align}
Hence, $\xi$ is a completely monotone function, since
\begin{align*}
  (-1)^jD_t^j\xi(t)\ge 0,\quad t\in (0,T),\,j=0,1,2,
\end{align*}
and consequently $\xi\in L_{1,loc}[0,\infty)$ is a positive type kernel, 
that is, for any $T\ge 0$ and $\phi\in \cC([0,T])$,
\begin{align}   \label{positivetype}
  \int_0^T\!\int_0^t\!\xi(t-s)\phi(t)\phi(s)\,ds\,dt\ge 0 .
\end{align}

%---------------------------------------------------------------------------%
%-----------    Theorem : a priori spatial discrete     ----------%
%---------------------------------------------------------------------------%
\begin{theorem} \label{Apriori-Semidiscrete}
Assume that $\Omega$ is a convex polygonal domain. 
Let $u$ and $u_h$ be, respectively, the solutions of 
\eqref{weakform1} and \eqref{FE}.
Then  

$(i)$ with initial condition $u_h^1=P_h u^1$, we have
\begin{equation}   \label{A_Priori_1}
  \begin{split}
     \|u_h(T)-u(T)\|
     &\le C\|u_h^0-R_h u^0\|
     + Ch^l\Big(\|u(T)\|_l +\int_0^T\!\|\dot u\|_l \,d\tau\Big),
  \end{split}
\end{equation}
%where we assume the initial condition $u_h^1=\cP_h u^1$. 

$(ii)$ and with initial condition $u_h^0=R_h u^0$, we have
\begin{equation}   \label{A_Priori_2}
    \|\dot u_h(T)-\dot u(T)\|
    \le C\|u_h^1-R_h u^1\|
      + C h^l\Big(\|\dot u(T)\|_l +\int_0^T\! \| \ddot u\|_l\, d\tau\Big).
\end{equation}
\begin{equation}   \label{A_Priori_3}
    \|u_h(T)-u(T)\|_V
    \le C\|u_h^1-R_h u^1\|_V
      + C h^{l-1}\Big(\|u(T)\|_l +\int_0^T\! \| \ddot u\|_{l-1}\, d\tau\Big).
\end{equation}
\end{theorem}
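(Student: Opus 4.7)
The plan is the standard Ritz split combined with Baker's backward-antiderivative test-function trick, adapted to the memory term by rewriting $K=-\dot\xi$ and invoking the positive-type kernel $\xi$ from \eqref{xi}.

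I first write $u_h-u=\theta+\rho$ with $\rho=R_h u-u$ and $\theta=u_h-R_h u\in V_h^l$. The component $\rho$ is controlled pointwise by \eqref{errorRh}, supplying the $\|u(T)\|_l$ (respectively $h^{l-1}\|u(T)\|_l$ in the $V$-norm case) contributions on the right-hand sides. Subtracting \eqref{FE} from \eqref{weakform1} restricted to $v=v_h\in V_h^l$, and using the Ritz orthogonality $a(\rho,v_h)=0$, yields
\begin{equation*}
  (\ddot\theta, v_h) + a(\theta, v_h) - \int_0^t\! K(t-s)\, a(\theta(s), v_h)\, ds = -(\ddot\rho, v_h), \quad v_h \in V_h^l.
\end{equation*}

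For parts $(ii)$ and $(iii)$, I would test with $v_h=\dot\theta$ and integrate in time. The memory term becomes $\int_0^T\!\int_0^t K(t-s)\,a(\theta(s),\dot\theta(t))\,ds\,dt$; writing $K=-\dot\xi$ and integrating by parts in $s$ turns it into boundary pieces plus a $\xi$-convolution of $\dot\theta$ tested against $\dot\theta$. Expanding $a(\cdot,\cdot)$ in the eigenbasis of $A$ and invoking \eqref{positivetype} shows this convolution has the right sign and can be dropped. With $u_h^0=R_h u^0$ so that $\theta(0)=0$, the energy identity gives $\|\dot\theta(T)\|^2+\|\theta(T)\|_V^2\lesssim \|\dot\theta(0)\|^2+\bigl(\int_0^T\|\ddot\rho\|\,d\tau\bigr)^2$ after Cauchy--Schwarz and Young; the Ritz bound $\|\ddot\rho\|\le Ch^l\|\ddot u\|_l$ then yields \eqref{A_Priori_2}, and using instead the coarser bound $\|\ddot\rho\|\le Ch^{l-1}\|\ddot u\|_{l-1}$ together with $\|\dot\theta(0)\|\le C\|\dot\theta(0)\|_V$ yields \eqref{A_Priori_3}.

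For part $(i)$, I would test with Baker's backward antiderivative
\begin{equation*}
  \psi(t) = -\int_t^T \theta(s)\,ds \in V_h^l, \qquad \dot\psi = \theta, \quad \psi(T)=0.
\end{equation*}
Integration by parts in $t$ converts $\int_0^T(\ddot\theta,\psi)\,dt$ into $\tfrac{1}{2}\|\theta(T)\|^2-\tfrac{1}{2}\|\theta(0)\|^2-(\dot\theta(0),\psi(0))$, and $-\int_0^T(\ddot\rho,\psi)\,dt$ into $(\dot\rho(0),\psi(0))+\int_0^T(\dot\rho,\theta)\,dt$; this is the step that reduces the required regularity on $\rho$ from two derivatives to one. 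The coupled initial term $(\dot\theta(0)+\dot\rho(0),\psi(0)) = (u_h^1-u^1,\psi(0))$ is killed exactly by the hypothesis $u_h^1=P_h u^1$ together with $\psi(0)\in V_h^l$. For the memory contribution I use the identity $(K*\theta)(t)=\kappa\theta(t)-\xi(t)\theta(0)-(\xi*\dot\theta)(t)$ combined with an integration by parts in $s$ (performed after the substitution $r=t-s$, so that only the \emph{finite} boundary value $\xi(0)=\kappa$ appears) and a change of order of integration to obtain the clean identity
\begin{equation*}
  \int_0^T\!\int_0^t K(t-s)\,a(\theta(s),\psi(t))\,ds\,dt = -\tfrac{\kappa}{2}\|\psi(0)\|_V^2 + \int_0^T\!\int_0^t \xi(t-s)\,a(\theta(s),\theta(t))\,ds\,dt.
\end{equation*}
The first piece combines with $\int_0^T a(\theta,\psi)\,dt=-\tfrac{1}{2}\|\psi(0)\|_V^2$ to leave the strictly positive coefficient $(1-\kappa)/2$ on $\|\psi(0)\|_V^2$, while the second piece is non-negative by \eqref{positivetype}. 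What remains is $\|\theta(T)\|^2\lesssim \|\theta(0)\|^2 + \sup_{t\le T}\|\theta(t)\|\int_0^T\|\dot\rho\|\,dt$; solving the resulting quadratic and using $\|\dot\rho\|\le Ch^l\|\dot u\|_l$ closes \eqref{A_Priori_1}.

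The main obstacle is precisely the convolution term under Baker's test function: a naive integration by parts in $s$ applied directly to $\int_0^t K(t-s)\,a(\theta(s),\psi(t))\,ds$ would produce the boundary factor $K(0)$, which is infinite in the weakly singular regime. The rewriting $K=-\dot\xi$ replaces that factor by the finite value $\xi(0)=\kappa$, the positive-type property \eqref{positivetype} disposes of the resulting quadratic convolution piece, and the smallness $\kappa<1$ guarantees a positive coefficient on $\|\psi(0)\|_V^2$; these three ingredients together are what allow Baker's optimal $L_\infty(L_2)$ estimate for the wave equation to survive the presence of the memory term with only one extra time derivative of $u$.
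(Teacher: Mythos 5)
Your proposal follows essentially the same route as the paper: the Ritz splitting $u_h-u=\theta+(R_hu-u)$, Baker's antiderivative test function for the $L_\infty(L_2)$ estimate of $u$ (with $u_h^1=P_hu^1$ killing the initial term), testing with $\dot\theta$ for the velocity and $V$-norm estimates (with $u_h^0=R_hu^0$ so $\theta(0)=0$), and in both cases the rewriting $K=-\dot\xi$ plus integration by parts so that only the finite value $\xi(0)=\kappa$ appears, with the positive-type property \eqref{positivetype} and $\kappa<1$ absorbing the memory term. The only blemish is a sign slip in your statement of $\int_0^T(\ddot\theta,\psi)\,dt$: with $\dot\psi=\theta$ and $\psi(T)=0$ this equals $-\tfrac12\|\theta(T)\|^2+\tfrac12\|\theta(0)\|^2-(\dot\theta(0),\psi(0))$, so the integrated identity must be multiplied through by $-1$ before the nonnegative terms are dropped; this is consistent with the final inequality you state and with the paper, whose test function $v_h=\int_t^{\varepsilon}\theta\,d\tau$ is exactly $-\psi$.
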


%---------------------------------------------------------------------------------------%
\begin{proof}
The proof is adapted from \cite{Baker1976}. 
We split  the error as 
\begin{equation}\label{Error_Split}
  e=u_h-u=(u_h-R_hu)+(R_hu-u)=\theta+\omega.
\end{equation}
We need to estimate $\theta$, since the spatial projection error $\omega$ 
is estimated from \eqref{errorRh}. 

So, putting $\theta$  in  \eqref{FE} we have, for $v_h \in V_h^l$, 
\begin{equation*} 
  \begin{split}
    (\ddot{\theta},v_h&)+a(\theta,v_h)-\int_0^t K(t-s) a(\theta(s),v_h)ds\\
    &= (\ddot u_h,v_h)+a(u_h,v_h)-\int_0^t K(t-s) a(u_h(s),v_h)ds\\
    &\quad - (R_h\ddot{u},v_h)-a(R_h u,v_h)+\int_0^t K(t-s) a(R_hu(s),v_h)ds,
  \end{split}
\end{equation*}
that using \eqref{FE}, the definition of the Ritz projection $R_h$, and 
\eqref{weakform1}, we have 
\begin{equation}\label{theta_eq1}
  \begin{split}
    (\ddot{\theta},v_h&)+a(\theta,v_h)-\int_0^t K(t-s) a(\theta(s),v_h)ds\\
    &= (f,v_h)-(R_h\ddot{u},v_h)-a(u,v_h)+\int_0^t K(t-s) a(u(s),v_h)ds\\
    &= (\ddot{u},v_h)-(R_h\ddot{u},v_h)
    =-(\ddot{\omega},v_h).
  \end{split}
\end{equation}
Therefore we can write, for $v_h(t) \in V_h^l$, $t \in (0,T]$,
\begin{equation*}
  \frac{d}{dt}(\dot{\theta},v_h)-(\dot{\theta},\dot v_h)
    +a(\theta,v_h)- \int_0^t K(t-s) a(\theta(s),v_h(t))ds
   =-\frac{d}{dt}(\dot{\omega},v_h)+(\dot{\omega},\dot v_h),
\end{equation*}
that, recalling $e=\theta+\omega$, we obtain
\begin{equation}\label{Psi_eq2}
  -(\dot{\theta},\dot v_h)+a(\theta, v_h)- \int_0^t  K(t-s) a(\theta(s),v_h(t))ds
  =-\frac{d}{dt}(\dot{e},v_h)+(\dot{\omega},\dot v_h)
\end{equation}
Now let $ 0<\varepsilon \leq T$ , and we make the particular choice
\begin{equation*}
  v_h(\cdot,t)
  =\int_t^{\varepsilon}\theta(\cdot,\tau)d\tau ,\qquad  0\leq t \leq T,
\end{equation*}
then clearly we have
\begin{equation} \label{zz}
  v_h(\cdot,\varepsilon)=0,\quad \frac{d}{dt}v_h(\cdot,t)=-\theta(\cdot,t),
   \quad 0\leq t \leq T.
\end{equation}
Hence, considering \eqref{zz} in \eqref{Psi_eq2} , we have
\begin{equation*}
\frac{1}{2} \frac{d}{dt} (\|\theta \|^2-\|v_h \|_V^2)
-\int_0^t K(t-s) a(\theta(s),v_h(t))ds
=-\frac{d}{dt}(\dot{e},v_h)-(\dot{\omega},\theta).
\end{equation*} 
Now, integrating from $t=0$ to $t=\varepsilon$, we have
\begin{equation*}
\begin{split}
   \|\theta(\varepsilon) \|^2-\|\theta(0) &\|^2-\|v_h(\varepsilon)\|^2_V
       + \|v_h(0)\|^2_V-2\int_0^{\varepsilon}\int_0^t K(t-s) a(\theta(s),v_h(t))dsdt\\
   &=-2(\dot{e}(\varepsilon),v_h(\varepsilon))+ 2(\dot{e}(0),v_h(0))
       -2\int_0^{\varepsilon}(\dot \omega,\theta)dt.
\end{split}
\end{equation*}
Then, using the initial assuption $u_h^1=\cP_h u^1$  that implies  
the second term on the right side is zero and recalling 
$v_h(\varepsilon)=0$, we conclude
\begin{equation*}
\begin{split}
   \|\theta(\varepsilon) \|^2 + \|v_h(0)\|^2_V  &
     - 2\int_0^{\varepsilon}\int_0^t K(t-s) a(\theta(s),v_h(t))dsdt\\
  &=\|\theta(0) \|^2 - 2\int_0^{\varepsilon}(\dot \omega,\theta)dt,
\end{split}
\end{equation*}
that, using the Cauchy-Schwarz inequality, implies
\begin{equation}  \label{eq3}
  \begin{split}
  \|\theta(\varepsilon) \|^2 +\|&v_h(0)\|_V^2 - 2\int_0^{\varepsilon}
     \int_0^t K(t-s) a(\theta(s),v_h(t))dsdt \\
  &\leq \|\theta(0) \|^2+2\max_{0 \leq t \leq \varepsilon} \|\theta (t)\| 
     \int _0^{\varepsilon} \|\dot \omega\|dt.
  \end{split}
\end{equation}

Now, by changing the order of integrals, 
using $ \frac{d}{dt}\xi(t-s)=-K(t-s) $ from \eqref{xiproperty}, 
and integration by parts, we can write the third term on the left 
side as
\begin{equation*}
\begin{split}
  -2\int_0^{\varepsilon}\int_0^t &K(t-s) a(\theta(s),v_h(t))dsdt
  =2\int_0^{\varepsilon}\int_s^{\varepsilon}\frac{d}{dt}\xi(t-s) a(\theta(s),v_h(t))dtds\\
  &=2\int_0^{\varepsilon} \xi(\varepsilon -s) 
   a(\theta(s),v_h(\varepsilon))ds-2\int_0^{\varepsilon} \xi(0) a(\theta(s),v_h(s))ds\\
  &\qquad-2\int_0^{\varepsilon}\int_s^{\varepsilon}\xi(t-s)a(\theta(s),\dot v_h(t))dtds.
\end{split}
\end{equation*}
Then, using \eqref{zz} and $\xi(0)=\kappa$, we have
\begin{equation*}
  \begin{split}
    -2\int_0^{\varepsilon}&\int_0^t K(t-s) a(\theta(s),v_h(t))dsdt\\
    &=\kappa(\|v_h(\varepsilon)\|^2_V-\|v_h(0)\|^2_V)
   +2\int_0^{\varepsilon}\int_s^{\varepsilon} \xi(t-s) a(\theta(s),\theta(t))dtds.
  \end{split}
\end{equation*}
Therefore, using this and $v_h(\varepsilon)=0$ in \eqref{eq3} we have
\begin{equation*}  
  \begin{split}
  \|\theta(\varepsilon) \|^2 +(1-\kappa)\|&v_h(0)\|_V^2 
  +2\int_0^{\varepsilon}\int_s^{\varepsilon} \xi(t-s) a(\theta(s),\theta(t))dtds\\
  &\leq \|\theta(0) \|^2+2\max_{0 \leq t \leq \varepsilon} \|\theta (t)\| 
     \int _0^{\varepsilon} \|\dot \omega\|dt,
  \end{split}
\end{equation*}
that considering the fact that $\xi$ is a positive type kernel and $\kappa<1$, 
in a standard way, implies that
\begin{equation*}
   \|\theta(T)\|\leq C(\|\theta(0)\|+\int_0^T\|\dot{\omega}\|d\tau). 
\end{equation*}
Hence, recaling \eqref{Error_Split}, we have
\begin{equation*} 
  \begin{split}
   \|e(T)\|
   &\leq \|\theta(T)\|+\|\omega(T)\|\\
   &\leq  C\Big(\|u_h^0-R_hu^0\|+\int _0^T\| R_h \dot u- \dot u\|dt\Big)
     +\|(R_hu -u)(T)\|,
  \end{split}
\end{equation*} 
that using the error estimate \eqref{errorRh} implies the a priori error 
estimate \eqref{A_Priori_1}. 

 Now, to prove the second and the third error estimates 
 \eqref{A_Priori_2}-\eqref{A_Priori_3}, we choose 
 $v_h=\dot \theta(t)$ in \eqref{theta_eq1}.  
 Then we have 
\begin{equation}\label{z_{3}}
\begin{split}
   \|\dot{\theta}(t)\|^2+\|\theta(t)\|_V^2
      -2\int_0^t\int_0^{\tau}&K(\tau-s) 
          a(\theta (s),\dot{\theta}(\tau))\ ds d\tau  \\
  &=\|\dot{\theta}(0) \|^{2}+\|\theta(0)\|_V^{2}
      -2\int_0^t(\ddot{\omega},\dot{\theta})d\tau. % \label{sama}
\end{split}
\end{equation}
We can write the third term in the left side as, recalling
  $K(t-s)=\frac{d}{ds}\xi(t-s)$, $\xi(0)=\kappa$ from \eqref{xiproperty}, 
  and integration by parts
\begin{equation}\label{z_{4}}
\begin{split} 
    -2\int_0^t\int_0^{\tau}&K(\tau-s) a(\theta (s),\dot{\theta}(\tau))\ dsd\tau \\
    &= -2\int_0^t\int_0^{\tau} \frac{d}{ds} 
          \xi(\tau-s) a(\theta (s),\dot{\theta}  (\tau))\ dsd\tau \\
    &=-2\int_0^t \xi(0) a(\theta (\tau),\dot{\theta}(\tau))\ d\tau
         + 2\int_0^t \xi(\tau) a(\theta (0),\dot{\theta}(\tau))\ d\tau\\
    &\qquad + 2\int_0^t\int_0^{\tau} \xi(\tau-s) 
          a(\theta' (s),\dot{\theta}(\tau))\ dsd\tau\\
    &=-\kappa\|\theta(t)\|_V^2+\kappa\|\theta(0)\|_V^2
          + 2\int_0^t \xi(\tau) a(\theta (0),\dot{\theta}(\tau))\ d\tau\\
    &\qquad +2\int_0^t\int_0^{\tau} \xi(\tau-s) 
               a(\theta' (s),\dot{\theta}(\tau))\ dsd\tau.
\end{split}
\end{equation}
Then, putting \eqref{z_{4}} in \eqref{z_{3}} and using  the initial value assumption 
$\theta(0)=u_h^0-R_h u^0=0$, we have
\begin{equation*} %\label{z_{3}}
\begin{split}
   \|\dot{\theta}(t)\|^2+(1-\kappa)\|\theta(t)\|_V^2
      +2\int_0^t\int_0^{\tau}& \xi(\tau-s) 
          a(\theta' (s),\dot{\theta}(\tau))\ ds d\tau  \\
  &=\|\dot{\theta}(0) \|^{2}
      -2\int_0^t(\ddot{\omega},\dot{\theta})d\tau. % \label{sama}
\end{split}
\end{equation*}
That, using the fact that $\xi$ is a positive type kernel, 
in a standard way, we have
\begin{equation*}
  \|\dot{\theta}(t)\| +(1-\kappa)\|\theta(t)\|_V^2 
   \leq C\big(\|\dot{\theta}(0) \|+\int_0^t\|\ddot{\omega}\|d\tau \big).
\end{equation*}
Hence, recalling \eqref{Error_Split} and $t\in(0,T]$, we have
\begin{equation*}
\begin{split}
  \|\dot{e}(T)\|&\leq \|\dot{\theta}(T)\|+\|\dot{\omega}(T)\|\\
  &\leq C\Big(\|u_h^1-R_hu^1\|
     +\int _0^T\|(R_{h}\ddot {u}-\ddot {u})(\tau)\ \|d\tau \Big)
       +\|(R_{h}\dot {u}-\dot {u})(T)\|\\
  \|e(T)\|_V&\leq \|\theta(T)\|_V+\|\omega(T)\|_V\\
  &\leq C\Big(\|u_h^1-R_hu^1\|
     +\int _0^T\|(R_{h}\ddot {u}-\ddot {u})(\tau) \| \ d\tau \Big)
       +\|(R_hu-u)(T)\|_V. 
\end{split}
\end{equation*}
These, using the error estimate \eqref{errorRh}, imply the a priori error 
estimates \eqref{A_Priori_2} and \eqref{A_Priori_3}. 
The proof is now complete. 
\end{proof}

We note that important tools in our error analysis is properly choosing initial 
data approximations and using 
the auxiliary function $\xi$, that is of positive type, 
and integration by parts, that simplifies the proof. 

Illustration of the order of convergence by our numerical examples are similar 
to, e.g.  \cite{StigFardin2010}, and therefore we have not presented 
numerical experiments, for short.

%----------------------------------%
%-------------------------------------------------%

%\textbf{Acknowledgment.} 
%I would like thank 

%\bibliographystyle{amsplain}
%\bibliography{Thesis}

%-----------------------------------------------------------%
%-----------------------------------------------------%
%-----------------------------------------------%
%\begin{comment}

\end{document}